\newcommand\numberthis{\addtocounter{equation}{1}\tag{\theequation}}
\newtheorem{remark}{Remark}
\newtheorem{assumption}{Assumption}
\newtheorem{lemma}{Lemma}
\newtheorem{theorem}{Theorem}
\begin{document}
\title{Distributed Optimization With Event-triggered Communication via Input Feedforward Passivity}
\author{Mengmou~Li, Lanlan~Su, and Tao~Liu
\thanks{Liu's work was supported in part by a grant from the Research Grants Council of the Hong Kong Special Administrative Region under the General Research Fund through Project No. 17256516 and the University of Hong Kong Research Committee Research Assistant Professor Scheme.}
\thanks{M.~Li and T.~Liu are with the Department
of Electrical and Electronic Engineering, The University of Hong Kong, Hong Kong, China (e-mail: mengmou\_li@hku.hk; taoliu@eee.hku.hk).}

\thanks{L.~Su is with School of Engineering, University of Leicester (e-mail: ls499@leicester.ac.uk).}
}
\date{}
\maketitle
\begin{abstract}
In this work, we address the distributed optimization problem with event-triggered communication by the notion of input feedforward passivity (IFP). First, we analyze the distributed continuous-time algorithm over uniformly jointly strongly connected balanced digraphs in an IFP-based framework.
Then, we propose a distributed event-triggered communication mechanism for this algorithm.
Next, we discretize the continuous-time algorithm by the forward Euler method {with a constant stepsize irrelevant to network size}, and show that the discretization can be seen as a stepsize-dependent passivity degradation of the input feedforward passivity. Thus, the discretized system preserves the IFP property and enables the same event-triggered communication mechanism but without Zeno behavior due to the discrete-time nature. Finally, a numerical example is presented to illustrate our results.
\end{abstract}
\section{Introduction}\label{Introduction}
\IEEEPARstart{D}{istributed} optimization problem aims to optimize the sum of objective functions of agents cooperatively, where each agent estimates the optimal solution based on local information and information obtained from its neighbors through a communication network. It has been widely studied in recent years, and numerous algorithms have been proposed, which can be categorized into two groups, i.e., discrete-time algorithms \cite{nedic2017achieving,xie2018distributed,li2018distributed,scutari2019distributed} and continuous-time algorithms \cite{gharesifard2013distributed,kia2015distributed,li2017distributed,li2019generalized}.

An important issue in distributed optimization is the relaxation of communication graph conditions, since the communication network may be unidirectional or even time-varying in practice. The work \cite{gharesifard2013distributed} generalizes the well-known proportional-integral (PI) algorithm to weight-balanced and strongly connected digraphs. A fully distributed adaptive algorithm for the design of parameters is proposed in \cite{li2017distributed}.
The work \cite{kia2015distributed} proposes a modified PI algorithm over weight-balanced and strongly connected switching digraphs. Recently, \cite{touri2019modified} incorporates a continuous-time push-sum algorithm to address the problem of general directed graphs. Besides, \cite{li2019input} proposes fully distributed algorithms over weight-balanced and uniformly jointly strongly connected digraphs based on input feedforward passivity (IFP).
There are also many related works in the discrete-time setting (e.g., \cite{nedic2017achieving,xie2018distributed,li2018distributed,scutari2019distributed} and references therein), while most of them adopt diminishing stepsizes or require global information.

Another critical problem for distributed optimization is event-triggered based communication motivated by practical issues in the communication network such as network congestion, limited bandwidth, and energy consumption.
A centralized event-triggered strategy is proposed in \cite{kia2015distributed}. An encoder-decoder event-trigger communication mechanism is introduced in \cite{liu2016event}. An edge-based event-triggered method is proposed in \cite{kajiyama2018distributed}.
The work \cite{liu2019distributed} considers event-triggered communication for distributed optimization with geometric constraints.
However, the abovementioned works only consider cases with undirected or fixed communication networks and cannot apply to directed and switching networks. Recently, a periodic sampling communication mechanism is proposed in \cite{su2019distributed} over weight-balanced and uniformly jointly strongly connected digraphs for resource allocation problems.
Considering distributed algorithms under event-triggered control over uniformly jointly strongly connected digraphs is of great significance, since the communication effort can be greatly reduced due to the lack of graph connectivity or consecutive communication. To the best of our knowledge, this problem has not been addressed yet.

In this work, we consider a distributed optimization problem over uniformly jointly strongly connected balanced graphs and propose a distributed event-triggered communication scheme for both the continuous-time and discrete-time algorithms via IFP.
The continuous-time algorithm has been proposed in \cite{li2019input,kia2015distributed}, while its discrete-time counterpart is first introduced here.



\section{Preliminaries}\label{Preliminaries}
\subsection{Notation}\label{Notation}
Let $\mathbb{R}$ be the set of real numbers. The Kronecker product is denoted as $\otimes$. Let $\left\| \cdot \right\|$ denote the 2-norm of a vector and also the induced 2-norm of a matrix.
Given a symmetric matrix $M \in \mathbb{R}^{m\times m}$, $M>0$ $(M \geq 0)$ means that $M$ is positive definite (positive semi-definite).
Let $I$ and $\mathbf{0}$ denote the identity matrix and zero matrix with proper dimensions, respectively. $\mathbf{1}_m := (1,\ldots, 1)^T \in \mathbb{R}^{m}$ denotes the vector with all ones.

\subsection{Convex Analysis}
A differentiable function $f: \mathbb{R}^{m} \rightarrow \mathbb{R}$ is \textit{convex} over a convex set $\mathcal{X} \subset \mathbb{R}^{m}$ if and only if $\left(\nabla f(x)-\nabla f(y)\right)^{T}(x-y)\geq 0$, $\forall x,~y \in \mathcal{X}$.
It is \textit{$\mu$-strongly convex} if and only if $\left(\nabla f(x)-\nabla f(y)\right)^{T}(x-y)\geq \mu \left\|x-y \right\|^{2}$, or equivalently,
$f(y) \geq f(x) + \nabla f(x)^T (y - x) + \frac{\mu}{2} \left\|y - x \right\|^2$, $\forall x, y \in \mathcal{X}$.
An operator $\mathbf{f}: \mathbb{R}^{m} \rightarrow \mathbb{R}^{m}$ is \textit{$l$-Lipschitz continuous} over a set $\mathcal{X} \in \mathbb{R}^{m}$ if $\left\| \mathbf{f}(x) - \mathbf{f}(y) \right\| \leq l \left\| x - y \right\|$, $\forall x,y\in \mathcal{X}$.

\subsection{Communication Network and Graph Theory}
The communication network is represented by a graph $\mathcal{G} = (\mathcal{N},\mathcal{E})$, where $\mathcal{N} = \{1,\ldots,N\}$ is the node or agent set, $\mathcal{E}\subset \mathcal{N}\times\mathcal{N}$ is the edge set. The edge $(i,j) \in \mathcal{E}$ means that agent $i$ can send information to agent $j$.
The graph $\mathcal{G}$ is said to be \textit{undirected} if $(i,j) \in \mathcal{E} \Leftrightarrow (j,i) \in \mathcal{E}$ and \textit{directed} otherwise.
The adjacency matrix $A = [a_{ij}]$ of $\mathcal{G}$ is defined as $a_{ii} = 0$; $a_{ij} > 0$ if $(j,i) \in \mathcal{E}$, and $a_{ij} = 0$, otherwise.
$\mathcal{G}$ is said to be \textit{strongly connected} if there exists a sequence of successive edges between any two agents. The in-degree and out-degree of the $i$th agent are $d_{in}^{i} = \sum_{j=1}^{N} a_{ij}$ and $d_{out}^{i} = \sum_{j=1}^{N} a_{ji}$, respectively. The graph $\mathcal{G}$ is said to be \textit{weight-balanced} if $d_{in}^{i} = d_{out}^{i},~\forall i \in \mathcal{N}$.
The Laplacian matrix of $\mathcal{G}$ is defined as $L = \textrm{diag}\{A \mathbf{1}_{N}\} - A$. Clearly, $L \mathbf{1}_{N} = \mathbf{0}$. Moreover, if $\mathcal{G}$ is weight-balanced, then $\mathbf{1}_{N}^T L = \mathbf{0}$. A time-varying graph $\mathcal{G}(t)$ with fixed nodes is said to be \textit{uniformly jointly strongly connected} (UJSC) if there exists a $T > 0$ such that for any $t_k$, the union $\cup_{t \in [t_k, t_k + T]} \mathcal{G}(t)$ is strongly connected.

\subsection{Passivity}
 Consider a nonlinear system $\Sigma$ described by
 \begin{equation}\label{general nonlinear system}
 \begin{cases}
 x^{+} = F \left(x, u \right)\\
 y = H \left( x,u \right)
 \end{cases}
 \end{equation}
where $x \in \mathcal{X} \subset \mathbb{R}^n$, $u \in \mathcal{U} \subset \mathbb{R}^m$ and $y \in \mathcal{Y} \subset \mathbb{R}^m$ are the state, input and output, respectively, and $\mathcal{X}$, $\mathcal{U}$ and $\mathcal{Y}$ are the state, input and output spaces, respectively. $x^{+}$ denotes the derivative of the state in the continuous-time (CT) case and the state at the next time step in the discrete-time (DT) case. The nonlinear functions $F: \mathcal{X} \times \mathcal{U} \rightarrow \mathbb{R}^{n}$, $H: \mathcal{X} \times \mathcal{U} \rightarrow \mathcal{Y}$ are assumed to be sufficiently smooth.

System $\Sigma$ is said to be \textit{passive} if there exists a continuously differentiable positive semi-definite function $V(x)$, called the \textit{storage function}, such that
$
\dot{V}(x) \leq u^T y,~\forall x\in \mathcal{X},~u\in \mathcal{U}
$ in CT case (or, $V\left( x(k+1) \right) - V\left( x(k) \right)\leq u^T y,~\forall x\in \mathcal{X},~u\in \mathcal{U},$ in DT case).
Moreover, it is said to be \textit{input feedforward passive} (IFP) if $\dot{V}(x) \leq u^T y - \nu u^T u$ for CT case (or, $V\left( x(k+1) \right) - V\left( x(k) \right)\leq u^T y - \nu u^T u$ for DT case), for some $\nu \in \mathbb{R}$, denoted as IFP($\nu$).
The sign of the IFP index $\nu$ denotes an excess or shortage of passivity. 



\subsection{Problem Formulation}
Consider a distributed optimization problem among a group of agents
\begin{align}\label{eq: distributed convex optimization}
\min_{\mathrm{x}} \sum_{i=1}^{N} f_i(\mathrm{x})
\end{align}
where $f_i : \mathbb{R}^{m} \rightarrow \mathbb{R}$ is the local objective function for agent $i$, $\forall i \in \mathcal{N} = \{1,\ldots,N\}$ and $\mathrm{x} \in \mathbb{R}^{m}$ is the decision variable.
We consider problem \eqref{eq: distributed convex optimization} with the following assumptions.

\begin{assumption}\label{Ass: objective function}
Each local function $f_i$ is sufficiently smooth, $\mu_i$-strongly convex and has $l_i$-Lipschitz continuous gradient.
\end{assumption}

\begin{assumption}\label{Ass: graph}
The time-varying communication digraph $\mathcal{G}$ is weight-balanced pointwise in time and UJSC.
\end{assumption}
\begin{assumption}\label{Ass: graph 2}
The communication protocol is designed such that both the sender and receiver of an edge are aware of its existence.
\end{assumption}

\begin{remark}
\Cref{Ass: graph 2} is a standard assumption in the literature, where each agent knows its out-degree \cite{nedic2017achieving, scutari2019distributed}. Then, each agent can locally manipulate its in/out-degree to render the global graph weight-balanced, while the strong connectedness is not required pointwise in time.
Moreover, no restriction on the switching rules
is imposed. The graph can change continuously provided \Cref{Ass: graph} and \ref{Ass: graph 2} are satisfied.
\end{remark}

\section{Continuous-time Algorithm}\label{Continuous-time Algorithm}
\subsection{IFP-based Distributed Algorithm}
We adopt a distributed algorithm that consists of a group of input feedforward passive system in the following form to solve problem \eqref{eq: distributed convex optimization},
\begin{subequations}\label{eq: algorithm individual agents}
\begin{align}
\dot{x}_i = & -\alpha \nabla f_i(x_i) - \lambda_i \label{eq: algorithm individual agents dynamics}\\
\dot{\lambda}_i = & - u_i \label{eq: algorithm individual agents input}
\end{align}
\end{subequations}
where $x_i \in \mathbb{R}^{m}$ is the decision variable for agent $i$, $\lambda_i \in \mathbb{R}^{m}$ is an auxiliary state for agent $i$ to track the difference between neighboring agents.
In the above individual system, $u_i$ is the system input taking the diffusive couplings of $x_i$, i.e.,
\begin{align}\label{eq: input u_i}
u_i = & \beta \sum_{j=1}^{N} a_{ij}(t)(x_j - x_i).
\end{align}
In the above equations, $\alpha > 0$, $\beta > 0$ are parameters to be designed. To ensure optimality under input \eqref{eq: input u_i}, $\lambda_i$ should satisfy $\sum_{i = 1}^{N} \lambda_i(0) = \mathbf{0}$, which can be fulfilled by setting $\lambda_i(0) = 0$, for all $i \in \mathcal{N}$.


Note that algorithm \eqref{eq: algorithm individual agents} is a simplified version of the algorithm reported in \cite{kia2015distributed,li2019input} in which \eqref{eq: algorithm individual agents dynamics} becomes $\dot{x}_i = -\alpha \nabla f_i(x_i) - \lambda_i + u_i$.
In this work, we first analyze the algorithm \eqref{eq: algorithm individual agents} with \eqref{eq: input u_i} in an input-feedforward passivity-based framework, and then propose an event-triggered mechanism for the algorithmic dynamics.

Define $\left(x_i^*, \lambda_i^*\right)$ as the equilibrium point of system \eqref{eq: algorithm individual agents}. 
Then, equilibrium point in \eqref{eq: algorithm individual agents input} ensures that $u_i \equiv 0$, which further gives $x_i^{*} = x_j^{*}$, $\forall i, j \in \mathcal{N}$. Summing up \eqref{eq: algorithm individual agents dynamics}, $\forall i \in \mathcal{N}$, one has
\[
\begin{array}{rl}
-\alpha \displaystyle\sum_{i = 1}^{N} \nabla f_i(x_i^*) = \sum_{i = 1}^{N} \lambda_i^* = (\mathbf{1}_{N}^T \otimes I)\lambda^* = \sum_{i = 1}^{N} \lambda_i(0) = \mathbf{0} \numberthis \label{eq: summing up}
\end{array}
\]
with $\lambda^* = ({\lambda_1^*}^T, \ldots, {\lambda_N^*}^T)^T$. The third equality in the above equation follows from $\mathbf{1}_{N}^T L = \mathbf{0}$.
Then $x_i^{*}$ is the unique optimal solution to problem \eqref{eq: distributed convex optimization} (see \cite{li2019input} for details).
Denote $\Delta x_i = x_i - x_i^*$, $\Delta \lambda_i = \lambda_i - \lambda_i^*$, and 
\begin{align}\label{eq: Bx}
\nabla f_i(x_i)-\nabla f_i(x_i^{*})=B_{x_i}\Delta x_i
\end{align}
where $B_{x_i}$ is defined as $B_{x_{i}}=\int_{0}^{1} \nabla ^{2}f_i(x_i^{*}+\tau (x_i-x_i^{*})) d \tau$. It follows that 
$\Delta \dot{x}_i =-\alpha B_{x_{i}} \Delta x_i-\Delta \lambda_i.$
Under Assumption \ref{Ass: objective function}, we have $\mu_i I\le B_{x_{i}} \le l_i I$.
\begin{lemma}\label{Lemma error system IFP}
Under Assumption \ref{Ass: objective function}, system \eqref{eq: algorithm individual agents} is IFP($\nu_i$) from $u_i$ to $\Delta x_i$ with index $\nu_i = -\frac{1}{\alpha^2 \mu_i^2}$ with respect to the storage function
\[
\begin{array}{rl}
V_i = & \frac{1}{\alpha \mu_i} \left\| \Delta \dot{x}_i \right\|^2 - \Delta x_i^T \Delta \lambda_i + \alpha \nabla f_i(x_i^*)^T \Delta x_i\\
& + \alpha \left( f_i(x_i^*) - f_i(x_i) \right).\numberthis \label{eq: Storage function}
\end{array}
\]
\end{lemma}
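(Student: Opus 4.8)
The plan is to verify the continuous-time IFP inequality $\dot V_i \le u_i^T \Delta x_i - \nu_i\|u_i\|^2$ with $\nu_i=-\frac{1}{\alpha^2\mu_i^2}$ directly, by differentiating the candidate storage function \eqref{eq: Storage function} along the error dynamics $\Delta\dot x_i=-\alpha B_{x_i}\Delta x_i-\Delta\lambda_i$ and $\Delta\dot\lambda_i=-u_i$ (the latter because $\lambda_i^*$ is constant). First I would record two algebraic facts to be reused repeatedly: from \eqref{eq: Bx} one has $-\alpha B_{x_i}\Delta x_i=\Delta\dot x_i+\Delta\lambda_i$, and differentiating $\Delta\dot x_i$ while using $\frac{d}{dt}\big(\nabla f_i(x_i)-\nabla f_i(x_i^*)\big)=\nabla^2 f_i(x_i)\Delta\dot x_i$ gives the second-order identity $\Delta\ddot x_i=-\alpha\nabla^2 f_i(x_i)\Delta\dot x_i+u_i$.

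Next I would differentiate $V_i$ term by term. The cross term $-\Delta x_i^T\Delta\lambda_i$ produces $-\Delta\dot x_i^T\Delta\lambda_i+\Delta x_i^T u_i$, so the desired supply rate $u_i^T\Delta x_i$ emerges here at once. The pair $\alpha\nabla f_i(x_i^*)^T\Delta x_i+\alpha(f_i(x_i^*)-f_i(x_i))$ differentiates to $-(\alpha B_{x_i}\Delta x_i)^T\Delta\dot x_i$, and substituting the first algebraic fact turns this into $\|\Delta\dot x_i\|^2+\Delta\lambda_i^T\Delta\dot x_i$. The two $\Delta\lambda_i^T\Delta\dot x_i$ contributions then cancel exactly --- this is the first place where the particular combination in $V_i$ is doing its job.

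The decisive term is $\frac{1}{\alpha\mu_i}\|\Delta\dot x_i\|^2$, whose derivative $\frac{2}{\alpha\mu_i}\Delta\dot x_i^T\Delta\ddot x_i$ becomes, via the second-order identity, $-\frac{2}{\mu_i}\Delta\dot x_i^T\nabla^2 f_i(x_i)\Delta\dot x_i+\frac{2}{\alpha\mu_i}\Delta\dot x_i^T u_i$. Here I would invoke strong convexity, $\nabla^2 f_i(x_i)\ge\mu_i I$, to bound the first piece by $-2\|\Delta\dot x_i\|^2$. Adding the three contributions and canceling as above leaves $\dot V_i\le u_i^T\Delta x_i-\|\Delta\dot x_i\|^2+\frac{2}{\alpha\mu_i}\Delta\dot x_i^T u_i$.

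The main obstacle --- really the only nonroutine step --- is recognizing the leftover indefinite terms as a completed square: $-\|\Delta\dot x_i\|^2+\frac{2}{\alpha\mu_i}\Delta\dot x_i^T u_i=-\big\|\Delta\dot x_i-\tfrac{1}{\alpha\mu_i}u_i\big\|^2+\frac{1}{\alpha^2\mu_i^2}\|u_i\|^2\le\frac{1}{\alpha^2\mu_i^2}\|u_i\|^2$, which yields exactly $\dot V_i\le u_i^T\Delta x_i+\frac{1}{\alpha^2\mu_i^2}\|u_i\|^2$, i.e. IFP($\nu_i$) with $\nu_i=-\frac{1}{\alpha^2\mu_i^2}$. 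This also explains the otherwise opaque coefficient $\frac{1}{\alpha\mu_i}$ in front of $\|\Delta\dot x_i\|^2$: it is chosen so that strong convexity supplies precisely the $-2\|\Delta\dot x_i\|^2$ needed to complete the square against $u_i/(\alpha\mu_i)$. A final bookkeeping item is to confirm that $V_i$ is admissible as a storage function (nonnegative); I would do this by writing $V_i$ as a quadratic form in $(\Delta x_i,\Delta\lambda_i)$ after substituting $\Delta\dot x_i=-\alpha B_{x_i}\Delta x_i-\Delta\lambda_i$ and using the bounds $\mu_i I\le B_{x_i}\le l_i I$.
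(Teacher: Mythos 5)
Your proposal is correct and follows essentially the same route as the paper's own proof: term-by-term differentiation of $V_i$, the cancellation via $\Delta\dot x_i=-\alpha B_{x_i}\Delta x_i-\Delta\lambda_i$, the Hessian bound $\nabla^2 f_i(x_i)\ge \mu_i I$ yielding $-2\|\Delta\dot x_i\|^2$, and the completing-the-square (Young's inequality) step producing the index $-\frac{1}{\alpha^2\mu_i^2}$. The only cosmetic difference is that the paper establishes nonnegativity of $V_i$ first, lower-bounding it by a quadratic form in $(\alpha B_{x_i}\Delta x_i,\Delta\lambda_i)$ rather than $(\Delta x_i,\Delta\lambda_i)$, which matches your sketched bookkeeping step in substance.
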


\begin{proof}
The strong convexity of $f_{i}(x_i)$ provides that 
\[
\begin{array}{rl}
 & \alpha \left( f_i(x_i^*) - f_i(x_i) \right) + \alpha \nabla f_i(x_i^*)^T \Delta x_i\\
\ge & \Delta x_{i}^{T}\left(-\alpha B_{x_i}+\frac{\alpha \mu_i}{2}I\right)\Delta x_{i}.
\end{array}
\]
Therefore, it can be derived that 
\[
V_i\ge 
\left(\begin{smallmatrix} \alpha B_{x_{i}} \Delta x_i \\ \Delta \lambda_i
\end{smallmatrix} \right)^{T}
{R}_i
\left(\begin{smallmatrix}\alpha B_{x_{i}} \Delta x_i \\ \Delta \lambda_i
\end{smallmatrix} \right)
\]
where $
{R}_i = \left(\begin{smallmatrix}
\frac{I}{\alpha \mu_i} - \frac{1}{\alpha}B_{x_i}^{-1} +\frac{\mu_i}{2 \alpha}B_{x_i}^{-2} & \frac{I}{\alpha \mu_i}-\frac{1}{2\alpha}B_{x_i}^{-1}\\
 \frac{I}{\alpha\mu_i}-\frac{1}{2\alpha}B_{x_i}^{-1} & \frac{I}{\alpha \mu_i}
\end{smallmatrix}\right) > 0$. Thus, $V_i$ is positive definite and radially unbounded with respect to $\left\| \left(\begin{smallmatrix} \alpha B_{x_{i}} \Delta x_i \\ \Delta \lambda_i
\end{smallmatrix} \right) \right\|$.
Taking the derivative of $V_i$ along system \eqref{eq: algorithm individual agents} gives 
\[
\begin{array}{rl}
\dot{V_{i}} = & \frac{1}{\alpha \mu_i} \frac{d \left\| \Delta\dot{x}_{i} \right\|^{2}}{dt}+\frac{d(-\Delta x_{i}^{T}\Delta\lambda_{i})}{dt}+\nonumber \\
 & \alpha\cdot\frac{d\left(f_{i}(x_{i}^{*})-f_{i}(x_{i})+\nabla f_i(x_i^{*})^{T}\Delta x_{i}\right)}{dt}\nonumber \\
\le & -2\left\| \Delta\dot{x}_{i} \right\|^{2} + \frac{2}{\alpha \mu_i}\Delta\dot{x}_{i}^{T}u_{i}+\Delta x_{i}^{T}u_{i}\nonumber \\
& -\left(\alpha B_{x_{i}}\Delta x_{i}+\Delta\lambda_{i}\right)^{T}\Delta\dot{x}_{i}\nonumber \\
 = & -||\Delta\dot{x}_{i}||^{2}+\frac{2}{\alpha \mu_i}\Delta\dot{x}_{i}^{T}u_{i}+\Delta x_{i}^{T}u_{i} \nonumber\\
\le & \Delta x_{i}^{T}u_{i}+ \frac{1}{\alpha^2 \mu_i^2}||u_i||^{2}, \numberthis
\end{array}
 \]
which completes the proof. 
\end{proof}

\begin{lemma}[\hspace{1sp}\cite{li2019input}]\label{Lemma convergence of IFP}
Under Assumption \ref{Ass: objective function} and \ref{Ass: graph}, the states of algorithm \eqref{eq: algorithm individual agents} with initial condition $\sum_{i=1}^{N} \lambda_i(0)= \mathbf{0}$ will converge to the optimal solution to problem \eqref{eq: distributed convex optimization} if the following condition holds,
\begin{equation}\label{eq: individual condition}
|\nu_i| \beta d_{in}^{i}(t) < \frac{1}{2},~\forall i\in\mathcal{N},~\forall t \geq 0
\end{equation}
where $d_{in}^{i}(t)$ denotes the in-degree of the $i$th agent and $\nu_i$ is the IFP index defined in \Cref{Lemma error system IFP}.
\end{lemma}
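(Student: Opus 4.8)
The plan is to use the aggregate storage function $V=\sum_{i=1}^{N}V_i$, with each $V_i$ taken from \Cref{Lemma error system IFP}, as a Lyapunov function for the interconnected error dynamics, and to show that the Laplacian coupling \eqref{eq: input u_i} supplies enough dissipation to overcome the shortage of passivity (recall $\nu_i<0$) of every agent. First I would rewrite the couplings compactly: since all $x_i^{*}$ coincide and $L(t)\mathbf{1}_N=\mathbf{0}$, the stacked input obeys $u=-\beta\,(L(t)\otimes I)\Delta x$ with $\Delta x=(\Delta x_1^T,\dots,\Delta x_N^T)^T$. Summing the IFP inequality of \Cref{Lemma error system IFP} over all agents then yields $\dot V\le \Delta x^{T}u+\sum_{i}|\nu_i|\,\|u_i\|^{2}$, so the whole problem reduces to controlling these two terms.

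For the cross term, weight-balancedness (\Cref{Ass: graph}) makes the symmetric part $L_s(t)=\tfrac12(L(t)+L(t)^{T})$ positive semidefinite, and the quadratic form admits the edge expansion $\Delta x^{T}(L_s\otimes I)\Delta x=\tfrac12\sum_{i,j}a_{ij}(t)\|x_i-x_j\|^{2}$; hence the cross term is $\Delta x^{T}u=-\beta\,\Delta x^{T}(L_s\otimes I)\Delta x\le 0$. For the passivity-shortage term, applying the Cauchy--Schwarz (Jensen) inequality to $u_i=\beta\sum_{j}a_{ij}(t)(x_j-x_i)$ gives $\|u_i\|^{2}\le \beta^{2}\,d_{in}^{i}(t)\sum_{j}a_{ij}(t)\|x_i-x_j\|^{2}$. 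Substituting both bounds, I obtain the clean per-edge estimate $\dot V\le \beta\sum_{i}\bigl(\beta|\nu_i|\,d_{in}^{i}(t)-\tfrac12\bigr)\sum_{j}a_{ij}(t)\|x_i-x_j\|^{2}$, in which the standing condition \eqref{eq: individual condition} forces every coefficient $\beta|\nu_i|d_{in}^{i}(t)-\tfrac12$ to be strictly negative, so $\dot V\le 0$ and $V$ is nonincreasing. Together with the positive definiteness of $V$ established in \Cref{Lemma error system IFP}, this already delivers boundedness of trajectories and Lyapunov stability.

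The main obstacle is the final convergence step. Because the graph is only \emph{uniformly jointly} strongly connected and time-varying, the instantaneous dissipation can vanish even away from consensus (the current graph may be disconnected), so the classical LaSalle invariance principle does not apply. I would instead integrate the bound to obtain $\int_{0}^{\infty}\sum_{i,j}a_{ij}(t)\|x_i(t)-x_j(t)\|^{2}\,dt<\infty$ and invoke a Barbalat/Matrosov-type argument tailored to jointly connected networks: the UJSC property guarantees that over each window $[t_k,t_k+T]$ the union graph is strongly connected, which converts integrability of the weighted disagreements into asymptotic consensus $x_i-x_j\to\mathbf{0}$. Once consensus is attained, $u\to\mathbf{0}$, and the conservation law $\sum_i\lambda_i(t)=\sum_i\lambda_i(0)=\mathbf{0}$ combined with the equilibrium characterization \eqref{eq: summing up} pins the common limit to the unique optimizer of \eqref{eq: distributed convex optimization}. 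Since this time-varying convergence machinery is precisely that of \cite{li2019input}, I would lean on their analysis to render this last step rigorous.
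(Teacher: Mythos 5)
Your proposal is correct and follows essentially the same route as the paper: the paper establishes this lemma (deferring details to \cite{li2019input}) with exactly the Lyapunov candidate $V=\sum_{i=1}^{N}V_i$, and your chain of estimates---summed IFP inequalities, the weight-balanced edge identity $\Delta x^{T}(L_s\otimes I)\Delta x=\tfrac12\sum_{i,j}a_{ij}(t)\|x_i-x_j\|^{2}$, and Cauchy--Schwarz on $u_i$---is precisely the specialization (with $e_i\equiv 0$) of the computation the paper itself carries out in the proof of \Cref{convergence under ETC continuous-time}. Your only deviation is in the last step, where you use a Barbalat/UJSC-window argument deferred to \cite{li2019input} while the paper appeals to a nonautonomous invariance principle \cite{barkana2014defending}; both resolve the time-varying-graph obstruction you correctly flag, so the approaches are materially the same.
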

This lemma characterizes the design of parameters $\alpha$, $\beta$ through input-feedforward passivity. In practice, we can fix one variable and design the other one.
\Cref{Lemma convergence of IFP} can be proved by considering the Lyapunov function candidate $V = \sum_{i = 1}^{N} V_i$. Readers can refer to \cite{li2019input} for the proof.

\subsection{Event-triggered Mechanism}
In this subsection, we reconsider the algorithm in \eqref{eq: algorithm individual agents} by incorporating an event-triggered communication mechanism, i.e., instead of transmitting the real-time $x_i$, an event-triggered-based input is considered,
\begin{equation}\label{eq: input with event-triggered}
\begin{aligned}
u_i = & \beta\sum_{i=1}^{N}a_{ij}(t)(\hat{x}_{j}-\hat{x}_{i})
\end{aligned}
\end{equation}
where $\hat{x}_{i},\ i\in\mathcal{N}$ denotes the latest sampled state of agent $i$ that has been transmitted to its neighbors and $\hat{x}_i(0) := x_i(0)$.
In \eqref{eq: input with event-triggered}, each agent only updates its current state $x_i$ to its out-neighbors when the local error signal $e_{i}(t) = x_{i}(t)-\hat{x}_{i}(t)$ exceeds a threshold depending on the latest received state of $x_j$ from its in-neighbors.
In this work, the triggering condition is 
\begin{equation}\label{eq: event-driven condition}
\left\|e_{i}(t) \right\|^2 \ge \frac{c_i}{d^{i}_{in}(t)}\left(\frac{1}{2} - |\nu_i| \beta d_{in}^{i}(t)\right)^2 \sum_{j=1}^{N} a_{ij}(t)\left\| \hat{x}_{j}-\hat{x}_{i}\right\|^{2}
\end{equation}
where $ c_i \in (0,1)$ is a constant. This triggering condition is fully distributed since only local information is needed.

In a time-varying graph, we stipulate that, whenever a link between two agents appears, the sender sends its last triggered state to the receiver, which is not considered as a ``triggering''.
Whenever a link disappears, the receiver modifies \eqref{eq: input with event-triggered} accordingly such that the disconnection between agents is not confused with the ``connected but non-triggering'' case. This can be guaranteed by \Cref{Ass: graph 2}.

The following theorem presents the convergence to the global optimal solution under event-triggered communication. 

\begin{theorem}\label{convergence under ETC continuous-time}
Under Assumptions \ref{Ass: objective function}--\ref{Ass: graph 2}, if $\alpha,\beta$ are designed such that \eqref{eq: individual condition} holds, and the triggering instant for agent $i$, $i\in \mathcal{N}$ to transmit its
current information of $x_{i}$ is chosen whenever $d^i_{in}(t) >0$ and the triggering condition \eqref{eq: event-driven condition} is satisfied.
{Suppose there exists a solution to system \eqref{eq: algorithm individual agents} under event-triggered control \eqref{eq: input with event-triggered}, \eqref{eq: event-driven condition} for all $t \geq 0$.}
Then the states with initial condition $\sum_{i=1}^{N}\lambda_{i}(0)=\mathbf{0}$
will converge to the optimal solution to problem \eqref{eq: distributed convex optimization}.
\end{theorem}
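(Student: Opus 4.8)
The plan is to reuse the passivity certificate from \Cref{Lemma error system IFP} and to show that the event-triggered input keeps the aggregate storage function non-increasing, and then to close the argument with the same UJSC consensus analysis that underlies \Cref{Lemma convergence of IFP}. First I would take the Lyapunov candidate $V = \sum_{i=1}^{N} V_i$ with $V_i$ as in \eqref{eq: Storage function}. Summing the IFP inequality of \Cref{Lemma error system IFP} over all agents gives
\[
\dot V \le \sum_{i=1}^{N} \Big( \Delta x_i^T u_i + |\nu_i|\,\|u_i\|^2 \Big),
\]
where now $u_i = \beta\sum_{j} a_{ij}(t)(\hat x_j - \hat x_i)$ is the sampled input from \eqref{eq: input with event-triggered}. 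The key manipulation is to split the true state into its sampled part and the triggering error: writing $\Delta\hat x_i := \hat x_i - x_i^*$ so that $\Delta x_i = \Delta\hat x_i + e_i$, I get $\Delta x_i^T u_i = \Delta\hat x_i^T u_i + e_i^T u_i$, which isolates an ``ideal'' coupling term and a measurement-error term.

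Next I would bound the three resulting terms against the local quantity $q_i := \sum_{j} a_{ij}(t)\|\hat x_j - \hat x_i\|^2$. Using $x_i^* = x_j^*$ together with the weight-balanced identity $\Delta\hat x^T(L(t)\otimes I)\Delta\hat x = \tfrac12\sum_i q_i$ (where $\Delta\hat x$ is the stacked vector of the $\Delta\hat x_i$), the ideal term collapses to $\sum_i \Delta\hat x_i^T u_i = -\tfrac{\beta}{2}\sum_i q_i$. By Cauchy--Schwarz, $\big\|\sum_{j} a_{ij}(\hat x_j-\hat x_i)\big\|^2 \le d_{in}^i(t)\,q_i$, so the quadratic term satisfies $\sum_i |\nu_i|\,\|u_i\|^2 \le \beta\sum_i |\nu_i|\beta d_{in}^i(t)\, q_i$. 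For the cross term I would apply Cauchy--Schwarz once more and then invoke the triggering rule \eqref{eq: event-driven condition}, which enforces $\|e_i\|^2 \le \tfrac{c_i}{d_{in}^i}\big(\tfrac12 - |\nu_i|\beta d_{in}^i\big)^2 q_i$ between events, giving $\sum_i e_i^T u_i \le \beta\sum_i \big(\tfrac12 - |\nu_i|\beta d_{in}^i\big)\sqrt{c_i}\, q_i$. Writing $\delta_i := \tfrac12 - |\nu_i|\beta d_{in}^i(t)$, which is strictly positive by \eqref{eq: individual condition}, the three bounds combine into
\[
\dot V \le \beta\sum_{i=1}^{N} \delta_i\big(\sqrt{c_i}-1\big)\, q_i \le 0,
\]
since $c_i \in (0,1)$. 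The point is that the thresholds in \eqref{eq: event-driven condition} are calibrated precisely so that the error term and the discretized-coupling term are absorbed by the negative ideal term, leaving $V$ non-increasing.

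Finally, having $\dot V \le 0$ with $V$ bounded below, I would conclude convergence exactly as in \Cref{Lemma convergence of IFP} and \cite{li2019input}: $V$ converges, $\int_0^\infty \sum_i \delta_i(1-\sqrt{c_i})q_i\,dt < \infty$, and a Barbalat-type argument forces $q_i \to 0$, i.e. the sampled states agree along every active edge. The triggering bound then yields $e_i \to 0$, so the actual states satisfy $x_i - x_j \to 0$; propagating this through the uniform joint strong connectivity over windows of length $T$ gives full consensus, and the conserved quantity $\sum_i \lambda_i(0) = \mathbf 0$ together with the equilibrium analysis \eqref{eq: summing up} identifies the limit as the unique optimizer of \eqref{eq: distributed convex optimization}. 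I expect this last step to be the main obstacle: because the digraph is switching and only jointly connected over intervals, the classical LaSalle invariance principle does not apply directly, so extracting consensus from $q_i \to 0$ requires the more delicate UJSC/Barbalat machinery of \cite{li2019input} rather than a pointwise connectivity argument. A secondary care point is the degenerate instants where $d_{in}^i(t)=0$, for which $u_i = 0$ and $q_i = 0$, so those agents contribute nothing and need no triggering, consistent with the hypothesis $d_{in}^i(t)>0$ in the statement.
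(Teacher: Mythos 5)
Your proposal is correct and takes essentially the same route as the paper: the same aggregate storage function $V=\sum_{i}V_i$ from \Cref{Lemma error system IFP}, the same splitting $\Delta x_i=\Delta\hat x_i+e_i$, the weight-balanced collapse of the ideal coupling term, and Cauchy--Schwarz on the quadratic term, arriving at $\dot V\le -\beta\sum_i \bigl(1-\sqrt{c_i}\bigr)\bigl(\tfrac12-|\nu_i|\beta d_{in}^i(t)\bigr)q_i\le 0$. The only cosmetic differences are that the paper obtains the triggering threshold by applying Young's inequality with a free parameter $\theta$ and optimizing it at $\theta=\tfrac12-|\nu_i|\beta d_{in}^i(t)$ (you instead verify the given threshold directly), and the paper closes with an invariance-principle argument where you use a Barbalat-type integral argument --- both endings rest on the same boundedness, conservation of $\sum_i\lambda_i$, and UJSC facts, and your caveat about the switching graph applies equally to the paper's own final step.
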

\begin{proof}
Consider the Lyapunov function candidate $V=\sum_{i=1}^{N}V_{i} \geq 0$, where $V_i$ was defined in \eqref{eq: Storage function}. From \Cref{Lemma error system IFP}, its derivative along \eqref{eq: algorithm individual agents} and \eqref{eq: input with event-triggered} yields
\begin{align*}
\dot{V}\le & { \displaystyle\sum_{i=1}^{N}}\Delta x_{i}^{T} u_{i} - \nu_i u_{i}^{T}u_{i} \\
= & { \displaystyle\sum_{i=1}^{N}}\left(\beta\Delta x_{i}^{T} \displaystyle\sum_{j=1}^{N}a_{ij}(t)\left(\Delta\hat{x}_{j}-\Delta\hat{x}_{i}\right)\right)\\
& +{ \displaystyle\sum_{i=1}^{N}} \beta^2 |\nu_i| \bigg\| \displaystyle\sum_{j=1}^{N} a_{ij}(t)\left(\Delta\hat{x}_{j}-\Delta\hat{x}_{i}\right) \bigg\|^{2}\\
= & { \displaystyle\sum_{i=1}^{N}}\left(\beta\left(\Delta\hat{x}_{i}+e_{i}\right)^{T}{ \displaystyle\sum_{j=1}^{N}} a_{ij}(t)\left( \Delta\hat{x}_{j}-\Delta\hat{x}_{i}\right)\right)\\
 & + { \displaystyle\sum_{i=1}^{N}} \beta^{2}|\nu_i| \bigg\| { \displaystyle\sum_{j=1}^{N}}a_{ij}(t)\left( \hat{x}_{j}- \hat{x}_{i}\right) \bigg\|^{2}\\
= & \beta {\displaystyle \sum_{i=1}^{N}} \left( {\displaystyle \sum_{j=1}^{N}}e_{i}^{T}a_{ij}(t)\left( \hat{x}_{j}- \hat{x}_{i}\right)+{\displaystyle\sum_{j=1}^{N}}a_{ij}(t)\Delta\hat{x}_{i}^{T}\Delta\hat{x}_{j}\right.\\
 & \left.-{\displaystyle \sum_{j=1}^{N}}a_{ij}(t)\Delta\hat{x}_{i}^{T}\Delta\hat{x}_{i} + \beta |\nu_i| \bigg\| {\displaystyle \sum_{j=1}^{N}}a_{ij}(t)\left( \hat{x}_{j}- \hat{x}_{i}\right) \bigg\|^{2} \right)
\end{align*}
where {$\Delta \hat{x}_i : = \hat{x}_i - x_i^*$}. The second equality in the above equation holds since $e_{i}=x_{i}-\hat{x}_{i}=\Delta x_{i} - \Delta\hat{x}_{i}$.
Observe that 
\[
\begin{array}{rl}
e_{i}^{T} a_{ij}(t)\left( \hat{x}_{j} - \hat{x}_{i}\right)
\leq a_{ij}(t) \left( \frac{1}{2\theta}\left\| e_{i}\right\|^2 + \frac{\theta}{2} \left\| \hat{x}_{j}- \hat{x}_{i}\right\|^2 \right)
\end{array}
\]
with $\theta > 0$,
and
\[
\begin{array}{rl}
 & {\displaystyle \sum_{i=1}^{N}\sum_{j=1}^{N}} a_{ij}(t)\Delta\hat{x}_{i}^{T}\Delta\hat{x}_{j}-{\displaystyle \sum_{i=1}^{N}\sum_{j=1}^{N}} a_{ij}(t)\Delta\hat{x}_{i}^{T}\Delta\hat{x}_{i}\\
= & -\frac{1}{2}{\displaystyle \sum_{i=1}^{N}\sum_{j=1}^{N}} a_{ij}(t)\left(\Delta\hat{x}_{i}^{T}\Delta\hat{x}_{i}-2\Delta\hat{x}_{i}^{T}\Delta\hat{x}_{j}+ \Delta \hat{x}_{j}^{T}\Delta\hat{x}_{j}\right)\\
= & -\frac{1}{2}{\displaystyle \sum_{i=1}^{N}\sum_{j=1}^{N}} a_{ij}(t)\left\| \hat{x}_{j}- \hat{x}_{i}\right\|^{2}
\end{array}
\]
where the first equality holds because the graph $\mathcal{G}(t)$ is
balanced. Moreover, by Cauchy\textendash Schwarz inequality, one has
\[
\bigg\| \sum_{j=1}^{N} a_{ij}(t)\left( \hat{x}_{j}- \hat{x}_{i}\right) \bigg\| ^{2}
\leq d_{in}^{i}(t) \sum_{j=1}^{N}a_{ij}(t)\left\| \hat{x}_{j} - \hat{x}_{i}\right\|^{2}.
\]
Hence, it can be further obtained that 
\[
\begin{array}{rl}
\dot{V}\le & -\frac{\beta}{2} \displaystyle \sum_{i=1}^{N} \sum_{j=1}^{N} a_{ij}(t) \left( -\frac{\left\| e_{i}\right\|^2}{\theta} + (1 - 2 |\nu_i| \beta d_{in}^{i}(t) -\theta) \right.\\
 & \left. \cdot \left\| \hat{x}_{j}- \hat{x}_{i}\right\|^{2}\vphantom{\displaystyle \sum_{j=1}^{N}} \right).
\end{array}
\]
A sufficient condition to ensure $\dot{V} \leq 0$ is
\[
\left\| e_{i}\right\|^2 \leq \frac{\theta}{d^{i}_{in}(t)}\left(1 - 2|\nu_i| \beta d_{in}^{i}(t) -\theta \right)\sum_{j=1}^{N} a_{ij}(t) \left\|\hat{x}_{j} - \hat{x}_{i}\right\|^{2}.
\]
The right hand side of the above inequality obtains the maximum value when $\theta = \frac{1}{2} - |\nu_i| \beta d_{in}^{i}(t)$. Define $c_i \in (0,1)$, then the triggering condition \eqref{eq: event-driven condition} guarantees $\dot{V} \leq 0$.
Define the domain $\Omega_{0} = \{(x, \lambda)|V(x,\lambda) \leq V(x(0), \lambda(0))\}$ with $x = (x_1^T, \ldots, x_N^T)^T$, $\lambda = (\lambda_1^T, \ldots, \lambda_N^T)^T $. Because $\dot{V} \leq 0$, it is clear that all system trajectories are bounded and contained within the domain $\Omega_0$. Next, define the domain $\Omega_{e} = \{(x, \lambda)|\dot{V} = 0\}$. It is clear that $\|\dot{x}_i\|$ and $\|\dot{\lambda}_i\|$ are bounded for any bounded $x_i$, $\lambda_i$, $\forall i \in \mathcal{N}$.
Invoking the Invariance Principle \cite[Theorem 2.3]{barkana2014defending}, all limit points of the bounded trajectory belong to the domain $\Omega_f = \Omega_0 \cap \Omega_{e}$, which implies $\lim_{t \rightarrow \infty} \|e_i(t)\| = 0$ and $\lim_{t \rightarrow \infty} \hat{x}_i = \lim_{t \rightarrow \infty} \hat{x}_j$. It follows that $\lim_{t \rightarrow \infty} x_i = \lim_{t \rightarrow \infty} x_j$. Summing up \eqref{eq: algorithm individual agents dynamics} similarly to \eqref{eq: summing up} we obtain $\lim_{t \rightarrow \infty} (x_i,\lambda_i ) = (x_i^*, \lambda_i^*)$, which completes the proof.

\end{proof}
\begin{remark}\label{Remark: event}
To avoid the Zeno behavior in practice, one can implement the following triggering condition instead,
\[
\begin{array}{rl}
\left\|e_{i}(t)\right\|^2 \geq \max \left\{\frac{c_i \left(\frac{1}{2}-\beta |\nu_i| \right)^2}{d^{i}_{in}(t)} \displaystyle \sum_{j=1}^{N} a_{ij}(t)\left\| \hat{x}_{j}-\hat{x}_{i}\right\|^{2},\zeta\right\}
\end{array}
\]
where $\zeta > 0$ is an small predefined error. 
However, under this condition no exact consensus but only practical consensus of $x_i$, $i\in \mathcal{N}$ can be reached since this condition only guarantees Lyapunov boundedness, and the closer $\zeta$ gets to zero, the more accurate the result can be.
Nevertheless, we will see in the next section that the event-triggered scheme \eqref{eq: event-driven condition} can be readily applied to the discretized algorithm, wherein Zeno behavior is avoided naturally.
\end{remark}
\section{Discrete-time Algorithm}\label{Discrete-time Algorithm}
In this section, we study the discretization of the continuous-time algorithm \eqref{eq: algorithm individual agents}. By applying the forward Euler method to algorithm \eqref{eq: algorithm individual agents} with respect to a constant stepsize $\delta > 0$, we can obtain the following discrete-time algorithm
\begin{subequations}\label{eq: algorithm discrete-time}
\begin{align}
{x}_i(k+1) = & {x}_i(k) - \delta \left(\alpha \nabla f_i({x}_i(k)) + {\lambda}_i(k)\right) \label{eq: algorithm discrete-time part 1}\\
{\lambda}_i(k+1) = & {\lambda}_i(k) - \delta u_i (k) \label{eq: algorithm discrete-time part 2}
\end{align}
\end{subequations}
where ${u}_i(k)$ is the input taking the diffusive couplings of $x_i(k)$, i.e.,
\[
u_i(k) = \beta \displaystyle \sum_{j=1}^{N} a_{ij}(k) \left( {x}_j(k) - {x}_i(k) \right). \numberthis \label{eq: input u_i discrete-time}
\]

\subsection{IFP Preservation}
We analyze the discrete-time algorithm from the perspective of passivity in this subsection.

\begin{lemma}[IFP preservation]\label{Lemma IFP preservation}
By selecting an appropriate stepsize 
\begin{align}\label{eq: stepsize}
\delta < \frac{1}{\alpha} \frac{4 l_i - 2 \mu_i}{2 l_i^2 - \mu_i^2}, \quad \forall i \in \mathcal{N},
\end{align}
system \eqref{eq: algorithm discrete-time} is IFP($\tilde{\nu}_i$) from ${u}_i$ to $\Delta {x}_i$ with
\begin{align}\label{eq: IFP index discrete-time}
\tilde{\nu}_i = - \frac{ \left(\frac{1}{\alpha \mu_i} + \delta \left( \frac{1}{2} + \frac{l_i}{\mu_i} \right) \right)^2}{\alpha \delta \left(\frac{\mu_i}{2} - \frac{l_i^2}{\mu_i}\right) + \frac{2 l_i}{\mu_i} - 1}.
\end{align}
\end{lemma}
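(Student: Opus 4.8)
The plan is to mirror the continuous-time argument of \Cref{Lemma error system IFP} at the level of one-step differences. First I would rewrite the error dynamics of \eqref{eq: algorithm discrete-time}: with $\Delta x_i = x_i - x_i^{*}$, $\Delta\lambda_i=\lambda_i-\lambda_i^{*}$ and the equilibrium identity $\alpha\nabla f_i(x_i^{*})+\lambda_i^{*}=\mathbf{0}$, the update reads $\Delta x_i(k+1)=\Delta x_i(k)-\delta\eta_i(k)$ and $\Delta\lambda_i(k+1)=\Delta\lambda_i(k)-\delta u_i(k)$, where $\eta_i:=\alpha B_{x_i}\Delta x_i+\Delta\lambda_i=\alpha\nabla f_i(x_i)+\lambda_i$ is the discrete counterpart of $-\Delta\dot{x}_i$. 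I would then take as storage function the discrete analog of \eqref{eq: Storage function}, i.e. $V_i$ with $\|\Delta\dot{x}_i\|^2$ replaced by $\|\eta_i\|^2$ (and rescaled by $1/\delta$, to reconcile the per-step supply with the standard form, since one step advances time by $\delta$ and $\lambda_i$ changes by $\delta u_i$). Crucially, because $\Delta\dot{x}_i=-\eta_i$ holds identically in continuous time, the nonnegativity $V_i\ge 0$ is inherited verbatim from \Cref{Lemma error system IFP} through the same matrix ${R}_i>0$, so no new positivity argument is needed.

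Next I would compute $V_i(k+1)-V_i(k)$ term by term. The bilinear piece $-\Delta x_i^{T}\Delta\lambda_i$ is expanded exactly; the function-value difference $f_i(x_i(k{+}1))-f_i(x_i(k))$ is lower-bounded by $\mu_i$-strong convexity (this is the source of the $\mu_i$ terms); and the gradient increment $\nabla f_i(x_i(k{+}1))-\nabla f_i(x_i(k))$ that appears inside $\|\eta_i(k+1)\|^2$ is controlled through $\mu_i I\preceq\nabla^2 f_i\preceq l_i I$ (this is the source of the $l_i$ terms). As in the continuous proof, a key simplification is that the two cross terms $\pm\,\alpha\delta\,\Delta x_i^{T}B_{x_i}\eta_i$, arising respectively from the difference of $-\Delta x_i^{T}\Delta\lambda_i$ and from the function-value difference, cancel; after this cancellation the one-step difference collapses to the supply term $u_i^{T}\Delta x_i$ plus a quadratic form in the internal variable $\eta_i$ and the input $u_i$.

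The last step is to complete the square in $\eta_i$. The bounding is to be arranged so that, after replacing the Hessian-weighted contributions by their $\mu_i/l_i$ bounds, the internal part becomes $-c_i\|\eta_i\|^2+2d_i\,\eta_i^{T}u_i+(\cdot)\|u_i\|^2$ (the sign of the cross coefficient is immaterial), where $c_i$ and $d_i$ are exactly the denominator and the square root of the numerator in \eqref{eq: IFP index discrete-time}. Completing the square bounds this by $(d_i^2/c_i)\|u_i\|^2$ and drops $-c_i\|\eta_i-(d_i/c_i)u_i\|^2\le 0$, which yields IFP($\tilde{\nu}_i$) with $\tilde{\nu}_i=-d_i^2/c_i$. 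Discarding the negative term is legitimate precisely when $c_i>0$, and a direct computation shows that $c_i=\alpha\delta\big(\tfrac{\mu_i}{2}-\tfrac{l_i^2}{\mu_i}\big)+\tfrac{2l_i}{\mu_i}-1>0$ is equivalent to the stepsize bound \eqref{eq: stepsize}; this is the reason \eqref{eq: stepsize} takes exactly that form.

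The hard part will be the bookkeeping of the $O(\delta)$ and $O(\delta^2)$ terms created by the finite difference, and in particular the cross terms carrying the \emph{second} mean-value Hessian from $\nabla f_i(x_i(k{+}1))-\nabla f_i(x_i(k))$, which has no continuous-time analog. This extra matrix weight is sign-indefinite in the $\eta_i$--$u_i$ coupling, so the delicate point is to bound it using $\mu_i$ and $l_i$ in the \emph{specific} way that makes the effective coefficients collapse to the $c_i$ and $d_i$ above rather than to looser values; reproducing the numerator's $\delta\big(\tfrac12+\tfrac{l_i}{\mu_i}\big)$ and the denominator's $\tfrac{2l_i}{\mu_i}-1$ with the correct constants is where the real care is required.
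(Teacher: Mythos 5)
Your proposal is correct and essentially reproduces the paper's own proof: your $\eta_i$ is exactly the paper's internal variable $z_i(k)=\alpha\nabla f_i(x_i(k))+\lambda_i(k)$, your storage function (the continuous-time $V_i$ with $\|\Delta\dot{x}_i\|^2$ replaced by $\|\eta_i\|^2$ and rescaled by $1/\delta$) is the paper's $\tilde{V}_i=V_i/\delta$, and the term-by-term expansion using the mean-value Hessian $B_{x_i^{+}}$, strong convexity, and the cancellation that collapses everything to a quadratic form in $(\eta_i,u_i)$ plus the supply term is exactly what the paper does. Your final completion of the square is algebraically equivalent to the paper's step of bounding the resulting matrix $M$ by a scalar $2\times 2$ matrix (via $\mu_i I\leq B_{x_i^{+}}\leq l_i I$) and requiring it to be negative semidefinite, which yields the same identifications $c_i>0\Leftrightarrow\eqref{eq: stepsize}$ and $\tilde{\nu}_i=-d_i^2/c_i$ as in \eqref{eq: IFP index discrete-time}.
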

\begin{proof}
Denote ${z}_i(k) = \alpha \nabla {f}_i({x}_i(k)) + {\lambda}_i(k)$. Adopt the storage function $\tilde{V}_i = \frac{1}{\delta} V_i$, where $V_i = $
First, we have
\begin{subequations}
\begin{align*}
& {V}_i(k+1) - {V}_i(k)\\
= & \frac{1}{\alpha \mu_i} \left(\|z_i(k+1)\|^2 - \|z_i(k)\|^2 \right) \numberthis \label{eq: storage function discrete-time part 1}\\
& - \Delta x_i(k+1)^T \Delta \lambda_i(k+1) + \Delta x_i(k)^T \Delta \lambda_i(k) \numberthis \label{eq: storage function discrete-time part 2}\\
& + \alpha \left(f_i(x_i(k))- f_i(x_i(k+1)) + \nabla f_i(x^*_i)^T \cdot \right.\\
& \left. ( x_i(k+1) - x_i(k)) \right) \numberthis \label{eq: storage function discrete-time part 3}
\end{align*}
\end{subequations}
Denote $\nabla f_i(x_i(k+1)) - \nabla f_i(x_i(k)) = B_{x_i^{+}} (x_i(k+1) - x_i(k))$, where $B_{x_i^{+}}$ is definied similarly as $B_{x_i}$ in \eqref{eq: Bx}, and is a positive definite matrix satisfying $\mu_i I \leq B_{x_i^{+}} \leq l_i I$. Then, \eqref{eq: storage function discrete-time part 1} becomes
\[
\begin{array}{rl}
& \frac{1}{\alpha \mu_i} \left( \| \alpha \nabla f_i(x_i(k+1)) + \lambda_i(k+1) \|^2 - \|z_i(k)\|^2 \right)\\
= & \frac{1}{\alpha \mu_i} \left( \| \alpha ( \nabla f_i(x_i(k)) - B_{x_i^{+}} \delta z_i(k) ) + \lambda_i(k) - \delta u_i(k) \|^2 \right. \\
& \left. - \|z_i(k)\|^2 \vphantom{B_{x_i^{+}}} \right)\\
= & \frac{1}{\alpha \mu_i} \left( \left\| \left( I - \alpha \delta B_{x_i^{+}} \right) z_i(k) - \delta u_i(k) \right \|^2 - \| z_i(k) \|^2 \right)\\
= & z_i(k)^T \left( \frac{\alpha \delta^2}{\mu_i} B_{x_i^{+}}^2 - \frac{2\delta}{\mu_i} B_{x_i^{+}} \right) z_i(k) + \frac{\delta^2}{\alpha \mu_i} \|u_i(k)\|^2\\
& - z_i(k)^T \left( \frac{2 \delta}{\alpha \mu_i}I - \frac{2 \delta^2}{\mu_i} B_{x_i^{+}} \right) u_i (k)
\end{array}
\]
where the first and second qualities follow from the definition of $z_i(k)$.
By substituting \eqref{eq: algorithm discrete-time} into \eqref{eq: storage function discrete-time part 2}, it can be readily obtained that \eqref{eq: storage function discrete-time part 2} equals to
\begin{align*}
& \Delta x_i(k)^T \Delta \lambda_i(k) - \left( \Delta x_i(k) - \delta z_i(k) \right)^T \left( \Delta \lambda_i(k) - \delta u_i (k)\right)\\
= & \delta z_i(k)^T \Delta \lambda_i(k) - \delta^2 z_i(k)^T u_i(k) + \delta \Delta x_i(k)^T u_i(k).
\end{align*}
Moreover, by the strong convexity of $f_i$, \eqref{eq: storage function discrete-time part 3} satisfies
\[
\begin{array}{rl}
& \alpha \left(f_i(x_i(k))- f_i(x_i(k+1)) + \nabla f_i(x^*_i)^T ( - \delta z_i(k)) \right)\\
\leq & \alpha \left( \vphantom{\frac{\mu_i}{2}} -\nabla f_i(x_i(k))^T \left( x_i(k+1) - x_i(k) \right)\right.\\
& \left. - \frac{\mu_i}{2} \|x_i(k+1) - x_i(k)\|^2
- \delta \nabla f_i(x^*_i)^T z_i(k) \right)\\
= & \alpha \delta \left(\nabla f_i(x_i(k)) - \nabla f_i(x_i^*) \right)^T z_i(k) - \frac{\alpha \mu_i}{2} \delta^2 \|z_i(k)\|^2
\end{array}
\]
where the equality follows from \eqref{eq: algorithm discrete-time part 1}. Summing up \eqref{eq: storage function discrete-time part 1}, \eqref{eq: storage function discrete-time part 2} and \eqref{eq: storage function discrete-time part 3} and by the definition of $\tilde{V}_i$, we obtain
\[
\begin{array}{rl}
& \tilde{V}_i(k+1) - \tilde{V}_i(k)\\
\leq & z_i(k)^T \left( \frac{\alpha \delta}{\mu_i} B_{x_i^{+}}^2 - \frac{2}{\mu_i} B_{x_i^{+}} + \left( 1- \frac{\alpha \delta \mu_i }{2}\right)I \right) z_i(k) \\
& -z_i(k)^T \left( \left(\frac{2}{\alpha \mu_i} + \delta \right) I - \frac{2 \delta}{ \mu_i} B_{x_i^{+}} \right) u_i + \Delta x_i(k)^T u_i(k)\\
& + \frac{\delta}{\alpha \mu_i} \| u_i(k) \|^2\\
= & \left(\begin{smallmatrix}
z_i(k)\\ u_i(k)
\end{smallmatrix} \right)^T M \left(\begin{smallmatrix}
z_i(k)\\ u_i(k)
\end{smallmatrix} \right) + \Delta x_i(k)^T u_i(k) - \tilde{\nu}_i \| u_i(k)\|^2
\end{array}
\]
where $M = \left(\begin{smallmatrix}
\frac{\alpha \delta}{\mu_i} B_{x_i^{+}}^2 - \frac{2}{\mu_i} B_{x_i^{+}} + \left( 1 - \frac{\alpha \delta \mu_i }{2}\right)I & \left(\frac{1}{\alpha \mu_i} + \frac{\delta}{2} \right) I - \frac{ \delta}{ \mu_i} B_{x_i^{+}}\\
* & \tilde{\nu}_i I 
\end{smallmatrix} \right)$, and the inequality follows from the definition of $z_i(k)$.
Apparently, since $\mu_i I \leq B_{x_i^{+}} \leq l_i I$, one has
\begin{align*}
M \leq 
\left(
\begin{smallmatrix}
\frac{\alpha \delta l_i^2}{\mu_i} - \frac{2 l_i}{\mu_i} + 1 - \frac{\alpha \delta \mu_i}{2} & \frac{1}{\alpha \mu_i} + \frac{\delta}{2} + \frac{\delta l_i}{\mu_i}\\
* & \tilde{\nu}_i
\end{smallmatrix}
\right)
\otimes I.
\end{align*}
Then, $M \leq 0$ if $\delta < \frac{1}{\alpha} \frac{4 l_i - 2 \mu_i}{2 l_i^2 - \mu_i^2}$ and $\tilde{\nu}_i \leq - \frac{ \left(\frac{1}{\alpha \mu_i} + \delta \left( \frac{1}{2} + \frac{l_i}{\mu_i} \right) \right)^2}{\alpha \delta \left(\frac{\mu_i}{2} - \frac{l_i^2}{\mu_i}\right) + \frac{2 l_i}{\mu_i} - 1}$. Thus, given an appropriate constant $\delta$, we obtain $\tilde{V}_i(k+1) - \tilde{V}_i(k) \leq \Delta x_i^T u_i - \tilde{\nu}_i \| u_i \|^2$, which completes the proof.
\end{proof}
\begin{remark}
The stepsize $\delta$ obtained in condition \eqref{eq: stepsize} is constant and non-diminishing. It is independent of network size and thus less conservative compared to many works in the literature \cite{nedic2017achieving,xie2018distributed,li2018distributed,scutari2019distributed}. Moreover, it can also be easily estimated distributedly given the bounds of indices $\mu_i$ and $l_i$, $\forall i \in \mathcal{N}$.
It can be observed that \eqref{eq: IFP index discrete-time} characterizes the passivity degradation over discretization. If the stepsize $\delta$ is infinitely small, then
$
\lim_{\delta \rightarrow 0^{+}} M =
\left(
\begin{smallmatrix}
I - \frac{2}{\mu_i} B_{x_i^{+}} & {\alpha \mu_i} I\\
* & \tilde{\nu}_i I 
\end{smallmatrix}
\right)
\leq 
\left(
\begin{smallmatrix}
-I & {\alpha \mu_i} I\\
* & \tilde{\nu}_i I 
\end{smallmatrix}
\right)
$ and $\lim_{\delta \rightarrow 0^{+}} \tilde{\nu} = - \frac{1}{\alpha^2 \mu_i^2}$,
which recovers to the IFP index $\nu_i$ for the continuous-time system.
\end{remark}

Next, we study the convergence of the algorithm following similar lines of \Cref{Lemma convergence of IFP}.
\begin{theorem}\label{Theorem Discretization stability}
Under Assumptions \ref{Ass: objective function}--\ref{Ass: graph 2}, the states of algorithm \eqref{eq: algorithm discrete-time}, \eqref{eq: input u_i discrete-time} with initial condition $\sum_{i=1}^{N} \lambda_i(0)= \mathbf{0}$ will converge to the optimal solution to problem \eqref{eq: distributed convex optimization} if the stepsize $\delta < \frac{1}{\alpha} \frac{4 l_i - 2 \mu_i}{2 l_i^2 - \mu_i^2} $, $\forall i \in \mathcal{N}$, and the following condition holds,
\[
\begin{array}{rl}
|\tilde{\nu}_i| \beta d^{i}_{in}(k) < \frac{1}{2}, ~\forall i \in \mathcal{N}, ~\forall k \geq 0 \numberthis \label{eq: convergence condition discrete-time}
\end{array}
\]
where $d_{in}^{i}(k)$ denotes the in-degree of the $i$th agent at $k$.
\end{theorem}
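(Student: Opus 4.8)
The plan is to mirror the Lyapunov argument behind \Cref{convergence under ETC continuous-time}, but with the continuous-time IFP property of \Cref{Lemma error system IFP} replaced by its discrete-time counterpart from \Cref{Lemma IFP preservation}. First I would take the aggregate storage function $\tilde V = \sum_{i=1}^N \tilde V_i$, which inherits positive definiteness and radial unboundedness in $(\Delta x_i, \Delta\lambda_i)$ from the quadratic lower bound established for \eqref{eq: Storage function}. Since the stepsize obeys \eqref{eq: stepsize}, each subsystem is IFP($\tilde\nu_i$), so summing the per-agent dissipation inequalities and writing $u_i(k) = \beta\sum_j a_{ij}(k)(\Delta x_j - \Delta x_i)$ gives
\[
\tilde V(k+1) - \tilde V(k) \leq \sum_{i=1}^N \left( \Delta x_i(k)^T u_i(k) + |\tilde\nu_i| \, \|u_i(k)\|^2 \right),
\]
using $|\tilde\nu_i| = -\tilde\nu_i$ because $\tilde\nu_i < 0$.

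Next I would simplify the right-hand side exactly as in the proof of \Cref{convergence under ETC continuous-time}, now \emph{without} the event-triggering error term. Weight-balancedness of $\mathcal{G}(k)$ yields $\sum_i \Delta x_i^T u_i = -\tfrac{\beta}{2}\sum_{i,j} a_{ij}(k)\|x_i - x_j\|^2$, and the Cauchy--Schwarz bound $\|u_i\|^2 \leq \beta^2 d_{in}^i(k)\sum_j a_{ij}(k)\|x_j - x_i\|^2$ controls the passivity-shortage term, so that
\[
\tilde V(k+1) - \tilde V(k) \leq -\beta\sum_{i=1}^N \left(\tfrac{1}{2} - \beta|\tilde\nu_i| d_{in}^i(k)\right)\sum_{j=1}^N a_{ij}(k)\,\|x_i(k) - x_j(k)\|^2,
\]
which is nonpositive precisely under condition \eqref{eq: convergence condition discrete-time}. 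Because $\tilde V$ is then nonincreasing and bounded below, it converges and the trajectory remains in the sublevel set $\{\tilde V \leq \tilde V(0)\}$, so all states are bounded; telescoping the decrease shows the summed disagreement is finite, whence $a_{ij}(k)\|x_i(k) - x_j(k)\|^2 \to 0$ along every active edge.

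Finally I would lift this edgewise agreement to network-wide consensus and identify the limit with the optimizer. The conserved quantity $\sum_i \lambda_i(k) = \sum_i\lambda_i(0) = \mathbf{0}$ holds for all $k$, since summing \eqref{eq: algorithm discrete-time part 2} over $i$ and using $\mathbf{1}_N^T L = \mathbf{0}$ gives $\sum_i u_i(k) = 0$. Invoking a discrete-time, time-varying invariance principle in the spirit of \Cref{convergence under ETC continuous-time}, the bounded trajectory approaches the largest invariant set on which the disagreement vanishes; there all $x_i$ coincide, $u_i \equiv 0$, and forward invariance forces the state to freeze, i.e. $z_i(k) := \alpha\nabla f_i(x_i(k)) + \lambda_i(k) \to 0$. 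Summing \eqref{eq: algorithm discrete-time part 1} over $i$ as in \eqref{eq: summing up} then gives $\alpha\sum_i\nabla f_i(\bar x) = \sum_i\lambda_i = \mathbf{0}$, so strong convexity identifies $\bar x = x^*$ and $\lambda_i \to \lambda_i^*$.

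I expect this last step to be the main obstacle. The decrease of $\tilde V$ only penalizes disagreement along \emph{currently active} edges, so a single time step cannot produce consensus on a merely UJSC graph; the crux is to propagate agreement across each joint-connectivity window $[k,k+T]$ and to run a discrete-time invariance argument for the resulting switched system, ruling out a consensus value that keeps drifting and certifying that the only surviving trajectories satisfy $z_i = 0$. The passivity and conservation structure established above is what makes this reduction tractable, but the switched, time-varying nature of the network is where the careful analysis is required.
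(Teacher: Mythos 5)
Your proposal takes essentially the same route as the paper: the paper's entire proof of \Cref{Theorem Discretization stability} is a one-line deferral to the argument of \Cref{Lemma convergence of IFP} (itself quoted from \cite{li2019input}) using the aggregate Lyapunov candidate $\tilde V = \sum_{i=1}^N \tilde V_i$ built from the storage functions of \Cref{Lemma IFP preservation}, and your decrease computation --- weight-balancedness producing the $-\tfrac{\beta}{2}\sum_{i,j} a_{ij}(k)\left\|x_i - x_j\right\|^2$ term, Cauchy--Schwarz absorbing the passivity shortage, and \eqref{eq: convergence condition discrete-time} rendering the difference nonpositive --- is exactly that intended argument, correctly executed. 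The step you flag as the open crux (lifting vanishing disagreement on currently-active edges to network-wide consensus and optimality under a merely UJSC switching graph) is indeed the genuinely hard part, but it is also precisely the part the paper itself leaves unproved, outsourcing it to the invariance analysis in \cite{li2019input}; your attempt is therefore no less complete than the paper's own proof, and more explicit in the dissipation step.
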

The proof is similar to that of \Cref{Lemma convergence of IFP} by considering the discrete-time Lyapunov function candidate $\tilde{V} = \sum_{i = 1}^{N} \tilde{V}_i$ where $\tilde{V}_i$ is the storage function defined in \Cref{Lemma IFP preservation}.

\subsection{Discrete-time Event-triggered Mechanism}
Similarly, let us consider the discrete-time algorithm incorporating the same event-triggered mechanism, i.e., replacing \eqref{eq: input u_i discrete-time} with
\begin{equation}\label{eq: input with event-triggered discrete-time}
\begin{aligned}
u_i(k) = & \beta \displaystyle \sum_{j=1}^{N} a_{ij}(k) \left( \hat{{x}}_j(k) - \hat{{x}}_i(k) \right)
\end{aligned}
\end{equation}
where $\hat{x}_i(k)$, $i \in \mathcal{N}$ denotes the last state of agent $i$ sent to its neighbors and and $\hat{x}_i(0) := x_i(0)$. The triggering condition is
\[
\begin{array}{rl}
\left\|{e}_{i}(k) \right\|^2
\geq &
\frac{c_i \left(\frac{1}{2} - |\tilde{\nu}_i| \beta d^{i}_{in}(k)\right)^2}{d^i_{in}(k)} \displaystyle \sum_{j=1}^{N} a_{ij}(k)\left\| \hat{{x}}_{j}(k)-\hat{{x}}_{i}(k)\right\|^{2} \numberthis \label{eq: event-driven condition discrete-time}
\end{array}
\]
where ${e}_{i}(k) = {x}_{i}(k) - \hat{x}_{i}(k)$ and $ c_i\in (0,1)$.
Then we have the following theorem on the convergence of discrete-time algorithm under event-triggered communication.

\begin{theorem}\label{Theorem convergence under ETC discrete-time}
Under Assumptions \ref{Ass: objective function}--\ref{Ass: graph 2}, if the stepsize satisfies $\delta < \frac{1}{\alpha} \frac{4 l_i - 2 \mu_i}{2 l_i^2 - \mu_i^2} $, $\forall i \in \mathcal{N}$, $\alpha,\beta$ are designed such that \eqref{eq: convergence condition discrete-time} holds, and the triggering instant for agent $i$, $i\in \mathcal{N}$ to transmit its current information of ${x}_i$ is chosen whenever $d^i_{in}(k) > 0$ and triggering condition \eqref{eq: event-driven condition discrete-time} is satisfied.
Then the states of algorithm
\eqref{eq: algorithm discrete-time}, \eqref{eq: input with event-triggered discrete-time} with initial condition $\sum_{i=1}^{N} {\lambda}_{i}(0)=\mathbf{0}$ will converge to the optimal solution to problem \eqref{eq: distributed convex optimization}.
\end{theorem}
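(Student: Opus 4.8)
The plan is to run the discrete-time analogue of the argument used for \Cref{convergence under ETC continuous-time}, replacing the continuous storage function by the discrete one $\tilde{V} = \sum_{i=1}^{N} \tilde{V}_i$ from \Cref{Lemma IFP preservation}, and the continuous IFP estimate of \Cref{Lemma error system IFP} by the discrete IFP inequality $\tilde{V}_i(k+1) - \tilde{V}_i(k) \le \Delta x_i(k)^T u_i(k) - \tilde{\nu}_i \|u_i(k)\|^2$. Since $\tilde{\nu}_i < 0$ under \eqref{eq: stepsize}, the term $-\tilde{\nu}_i \|u_i\|^2 = |\tilde{\nu}_i|\,\|u_i\|^2$ plays exactly the role of the passivity-shortage term in the continuous-time proof, so summing the per-agent inequalities over $i\in\mathcal{N}$ yields a bound on $\tilde{V}(k+1) - \tilde{V}(k)$ of precisely the same algebraic form obtained there.

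First I would substitute the event-triggered input \eqref{eq: input with event-triggered discrete-time} and write $\Delta x_i(k) = \Delta \hat{x}_i(k) + e_i(k)$ with $\Delta\hat{x}_i := \hat{x}_i - x_i^*$. The cross term involving $\Delta\hat{x}_i$ collapses, via the weight-balanced property of $\mathcal{G}(k)$, to $-\tfrac{1}{2}\sum_{i,j} a_{ij}(k)\|\hat{x}_j - \hat{x}_i\|^2$; the error term is handled by Young's inequality with a free parameter $\theta>0$; and the $|\tilde{\nu}_i|$-quadratic is bounded via Cauchy--Schwarz by $|\tilde{\nu}_i|\beta^2 d_{in}^i(k)\sum_j a_{ij}(k)\|\hat{x}_j - \hat{x}_i\|^2$. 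Optimizing over $\theta$ gives $\theta = \tfrac{1}{2} - |\tilde{\nu}_i|\beta d_{in}^i(k)$, which is positive exactly because \eqref{eq: convergence condition discrete-time} holds, and the triggering rule \eqref{eq: event-driven condition discrete-time} with $c_i\in(0,1)$ then forces $\tilde{V}(k+1) - \tilde{V}(k) \le 0$. This part is a line-by-line transcription of the continuous-time computation and should present no difficulty.

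The substantive step is converting the nonincrease of $\tilde{V}$ into convergence, where the continuous-time Invariance Principle invoked in \Cref{convergence under ETC continuous-time} is unavailable. Here I would retain a strictly negative margin: with the optimal $\theta$, the increment is bounded by $-\tfrac{\beta}{2}\sum_i \theta_i(k)(1-c_i)\sum_j a_{ij}(k)\|\hat{x}_j - \hat{x}_i\|^2 \le 0$, where $\theta_i(k) = \tfrac{1}{2} - |\tilde{\nu}_i|\beta d_{in}^i(k)$. Since $\tilde{V}(k)\ge 0$ is nonincreasing it converges, so these margins are summable over $k$ and hence $\sum_j a_{ij}(k)\|\hat{x}_j(k) - \hat{x}_i(k)\|^2 \to 0$ (using that the in-degrees are uniformly bounded, so $\theta_i(k)(1-c_i)$ stays bounded away from zero whenever $d_{in}^i(k)>0$). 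The triggering bound \eqref{eq: event-driven condition discrete-time} then forces $\|e_i(k)\|\to 0$, so $x_i$ and $\hat{x}_i$ share the same limiting behaviour.

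The final and hardest obstacle is to upgrade these pairwise-difference limits into agreement on a single value and to identify that value as the optimizer, exactly where the non-triggered proof of \Cref{Theorem Discretization stability} does its work. I would invoke the UJSC property of \Cref{Ass: graph} to propagate $\sum_j a_{ij}(k)\|\hat{x}_j - \hat{x}_i\|^2 \to 0$ across each length-$T$ window into agreement of all $\hat{x}_i$, hence of all $x_i$. I would then use the conservation law $\sum_{i=1}^N \lambda_i(k) = \sum_{i=1}^N\lambda_i(0) = \mathbf{0}$ for all $k$ (which holds because $\sum_i u_i(k)=0$ on a balanced graph), together with the limiting relation $x_i(k+1)-x_i(k) = -\delta(\alpha\nabla f_i(x_i(k)) + \lambda_i(k)) \to \mathbf{0}$. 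Summing the latter over $i$ as in \eqref{eq: summing up} gives $\alpha\sum_i\nabla f_i(\bar{x}) = \mathbf{0}$, so the common limit $\bar{x}$ satisfies the optimality condition and $(x_i,\lambda_i)\to(x_i^*,\lambda_i^*)$. I expect this consensus-to-optimality passage, rather than the Lyapunov bookkeeping, to require the most care; note that Zeno behaviour is absent automatically, since events can only be triggered at integer steps.
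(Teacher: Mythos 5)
Your overall route is exactly the one the paper intends (its own ``proof'' is the single remark that the argument mirrors \Cref{convergence under ETC continuous-time}), and the Lyapunov bookkeeping part of your proposal is sound: the IFP inequality of \Cref{Lemma IFP preservation} holds for an arbitrary input, so substituting \eqref{eq: input with event-triggered discrete-time}, splitting $\Delta x_i = \Delta\hat{x}_i + e_i$, using balancedness, Young's and Cauchy--Schwarz inequalities, and optimizing $\theta$ gives $\tilde{V}(k+1)-\tilde{V}(k) \le -\tfrac{\beta}{2}\sum_i \theta_i(k)(1-c_i)\sum_j a_{ij}(k)\|\hat{x}_j-\hat{x}_i\|^2 \le 0$, and replacing the invariance principle by summability of these margins is the right discrete-time device.

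The gap is in your final step. Summability only yields that the \emph{active-edge} disagreements $\sum_j a_{ij}(k)\|\hat{x}_j(k)-\hat{x}_i(k)\|^2$, and hence $u_i(k)$, vanish; from this you assert $x_i(k+1)-x_i(k) = -\delta z_i(k) \to \mathbf{0}$, where $z_i(k) = \alpha\nabla f_i(x_i(k))+\lambda_i(k)$, but nothing you have established controls $z_i$: consensus of the $\hat{x}_i$ together with $u_i \to 0$ is compatible with all agents drifting in unison with $\|z_i\|$ bounded away from zero, since the pure IFP inequality $\tilde{V}_i(k+1)-\tilde{V}_i(k)\le \Delta x_i^T u_i - \tilde{\nu}_i\|u_i\|^2$ has discarded every term that penalizes $z_i$. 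Worse, your UJSC propagation step silently needs the same fact: vanishing disagreement on edges \emph{at the times they are active} chains into global agreement over a length-$T$ window only if the states move by $o(1)$ within the window, i.e.\ only if $\delta z_i(k)\to 0$. The repair is available inside the proof of \Cref{Lemma IFP preservation}: keep the quadratic term, i.e.\ use $\tilde{V}_i(k+1)-\tilde{V}_i(k) \le (z_i,u_i)^T M (z_i,u_i) + \Delta x_i^T u_i - \tilde{\nu}_i\|u_i\|^2$. Under the \emph{strict} stepsize inequality the scalar $a_i = \tfrac{\alpha\delta l_i^2}{\mu_i}-\tfrac{2l_i}{\mu_i}+1-\tfrac{\alpha\delta\mu_i}{2}$ bounding the $(1,1)$ block is strictly negative, and $\tilde{\nu}_i = b_i^2/a_i$ with $b_i = \tfrac{1}{\alpha\mu_i}+\tfrac{\delta}{2}+\tfrac{\delta l_i}{\mu_i}$, so $(z_i,u_i)^T M (z_i,u_i) \le a_i\|z_i + (b_i/a_i)u_i\|^2$. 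Carrying these terms through the telescoping sum gives $\sum_k \|z_i(k)+(b_i/a_i)u_i(k)\|^2 < \infty$, hence $z_i(k)\to 0$ once $u_i(k)\to 0$; with that, both your window argument and the identification of the limit (via $\sum_i\lambda_i(k)\equiv\mathbf{0}$, $\alpha\sum_i\nabla f_i(x_i(k))\to\mathbf{0}$, and strong convexity of $\sum_i f_i$) go through as you describe.
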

The proof follows from arguments similar to that of \Cref{convergence under ETC continuous-time}, and is omitted here.
\begin{remark}
\Cref{Theorem convergence under ETC discrete-time} compares favorably to other works in the literature \cite{kia2015distributed,liu2016event,kajiyama2018distributed,liu2019distributed}, where only undirected or fixed topologies are considered.
Though we do not derive conditions that excludes the case where the states might update at every time step, it is shown with an example in \Cref{Numerical Example} that communication is greatly reduced using the proposed mechanism with appropriate parameters.
\end{remark}

\section{Numerical Example}\label{Numerical Example}
In this section, we provide a numerical example to illustrate the proposed continuous-time and discrete-time algorithms under event-triggered communication.
Consider the distributed optimization problem \eqref{eq: distributed convex optimization} among $5$ agents over a weight-balanced and uniformly jointly strongly connected digraph that is switching every two seconds among two modes, as shown in \Cref{fig: Connecting graphs}. The weights are set to $a_{ij} \in \{0, 1\}$ for simplicity.
\begin{figure}[htbp]
\centering
\includegraphics[width= 0.8\linewidth]{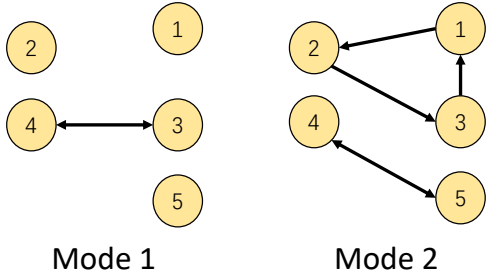}
\caption{The communication graph is weight-balanced and switching every two seconds among the two modes.}
\label{fig: Connecting graphs}
\end{figure}
The local objective functions are
\[
\begin{array}{rl}
f_1(\mathrm{x}) = & \frac{1}{2}\mathrm{x}^2+3\mathrm{x}+1, ~~ f_2(\mathrm{x})=\frac{1}{2}\mathrm{x}^2 - \mathrm{x},\\
f_3(\mathrm{x}) = & \mathrm{x}^2 + \sin{\mathrm{x}}, ~~f_4(\mathrm{x})=\ln(e^{2\mathrm{x}}+1) + 0.5 \mathrm{x}^2,\\
f_5(\mathrm{x}) = & \ln(e^{2\mathrm{x}} + e^{-0.2\mathrm{x}}) + 0.6\mathrm{x}^2.
\end{array} 
\]
We obtain that these functions are strongly convex with $\mu_1 = \mu_2 = \mu_3 = \mu_4 = 1$, $\mu_5 = 1.2$ and have Lipschitz gradient with $l_1 = l_2 = 1$, $l_3 = 3$, $l_4 = 2$ and $l_5 = 2.41$. Let $\alpha = 1$, $c_i = 0.99$, initial conditions $x_i(0) \in [0,1]$, $\lambda_i(0) = 0$, and we consider the following two cases.\\
\textbf{Continuous-time}:
We obtain $\nu_i = - \frac{1}{\mu_i^2}$ from \Cref{Lemma error system IFP}, and $0 < \beta < 0.5$ from \Cref{Lemma convergence of IFP}.
Choose $\beta = 0.2$ and apply the continuous-time algorithm \eqref{eq: algorithm individual agents} under event-triggered control laws \eqref{eq: input with event-triggered}, \eqref{eq: event-driven condition} in MATLAB.
The trajectories of $x_i(t)$ and triggering instants of $x_i$ under event-triggered communication are shown in \Cref{fig: Trajectories of continuous-time algorithm}. It can be observed that the states converge to the optimal solution while communication is reduced due to both the jointly strongly connected graph and the event-triggered mechanism.\\
\textbf{Discrete-time}:
We obtain from \Cref{Lemma IFP preservation} that $\delta < 0.59$. Select $\delta = 0.1$, then $\tilde{\nu}_1 = \tilde{\nu}_2 = -1.39$, $\tilde{\nu}_3 = -0.44$, $\tilde{\nu}_4 = -0.59$, and $\tilde{\nu}_5 -0.45$. $\beta$ should be less than $0.36$ according to \Cref{Theorem Discretization stability}.
The convergence results of the discrete-time algorithm \eqref{eq: algorithm discrete-time} under event-triggered control laws \eqref{eq: input with event-triggered discrete-time}, \eqref{eq: event-driven condition discrete-time} with $\beta = 0.1, 0.3$ are shown in \Cref{fig: Trajectories of discrete-time algorithm,fig: triggering instant discrete-time}, respectively.
It can be observed that communication is greatly reduced here.
We can see from \eqref{eq: event-driven condition discrete-time} that if other parameters are fixed, the events are triggered more frequently under a larger $\beta$. 
Meanwhile, $\beta$ denotes the coupling strength of agents' states, which affects consensus speed.
Thus, there exists a trade-off between the triggering instant and convergence performance, which can be observed from \Cref{fig: performance of discrete-time algorithm}.

\begin{figure}
\centering
\subfigure[The trajectories of $x_i(t)$.]{\includegraphics[width = 0.48\linewidth]{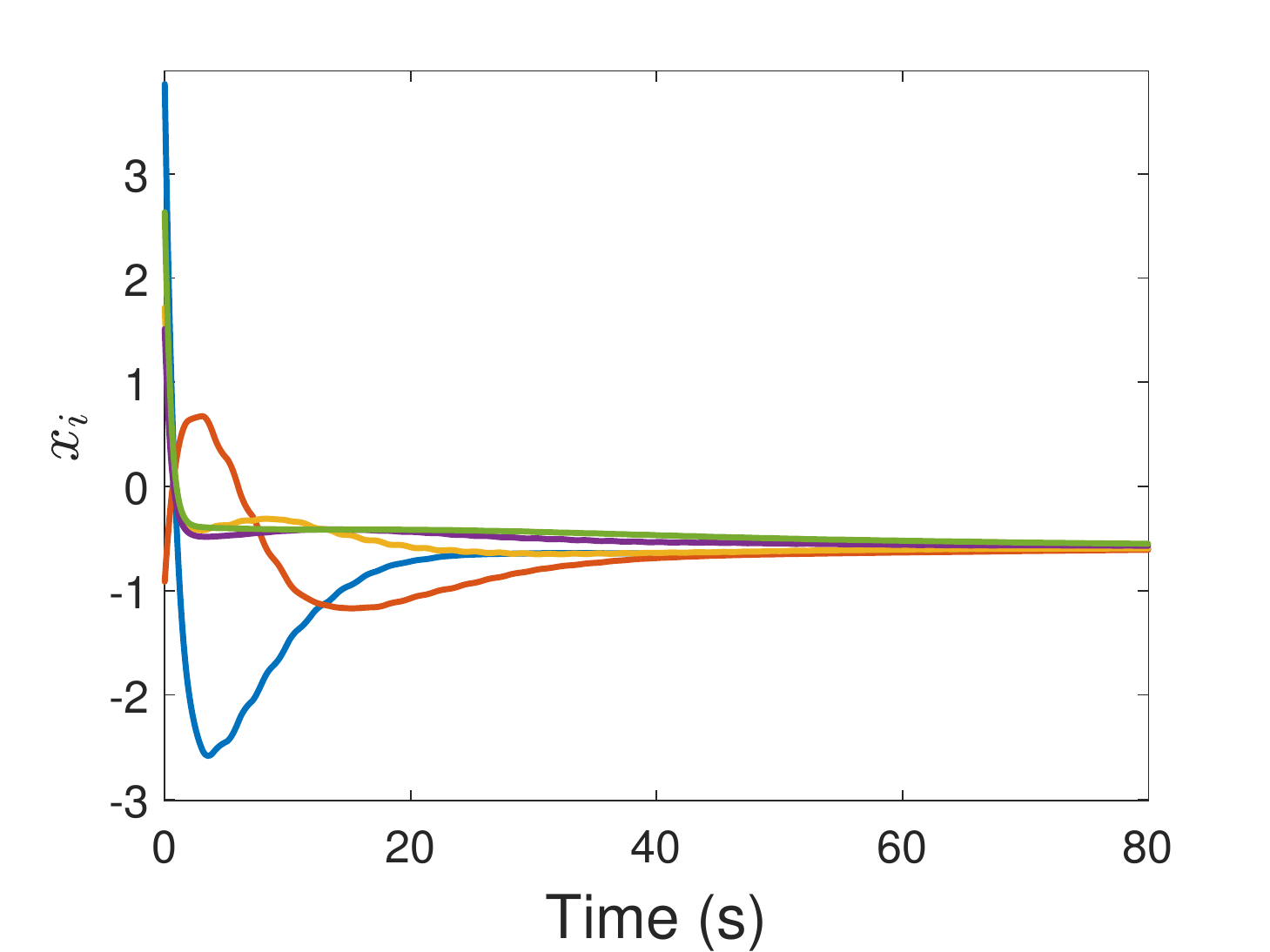}}
\subfigure[triggering instant of ${x}_i$.]{\includegraphics[width =0.48 \linewidth]{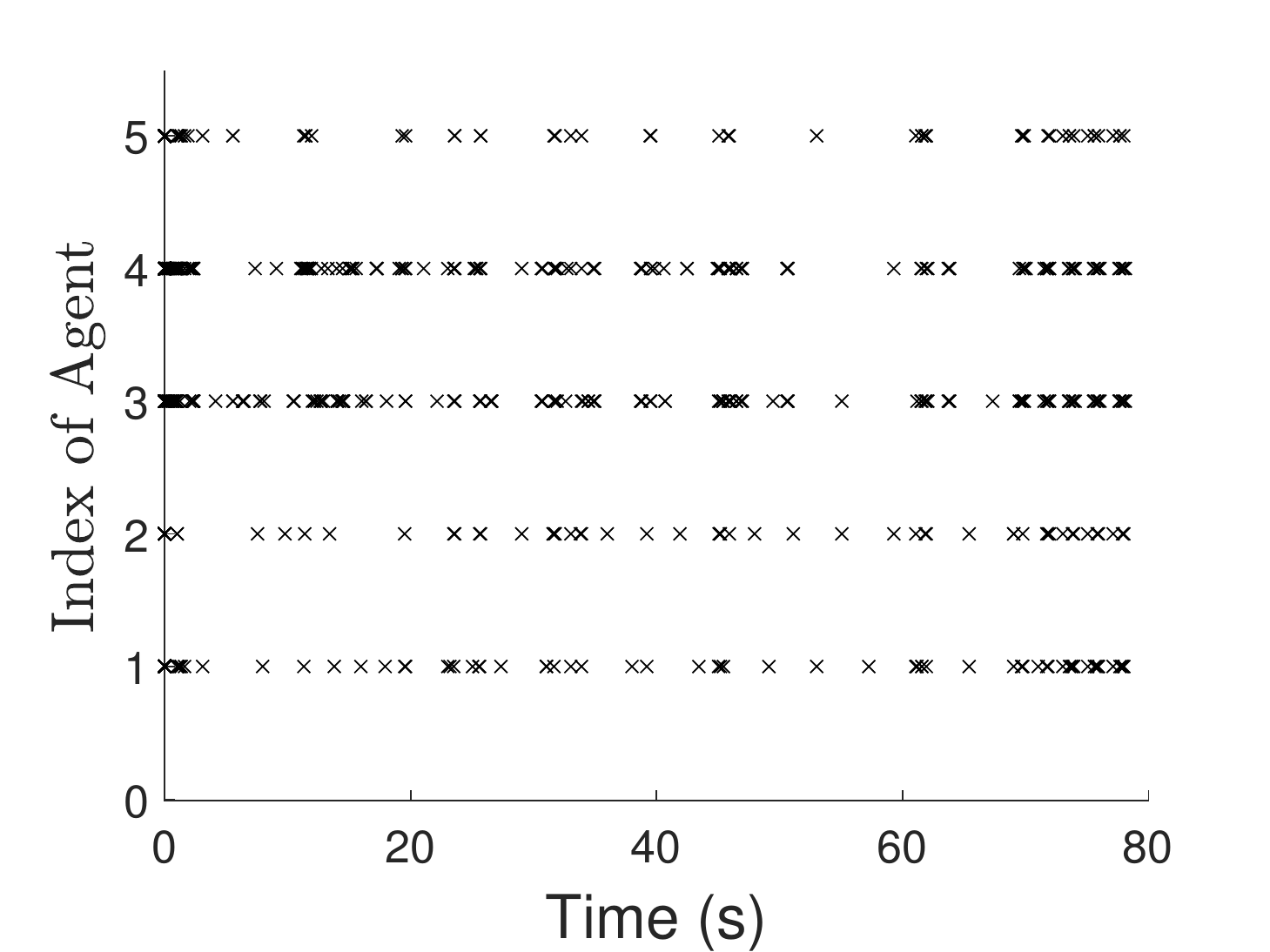}}

\caption{The trajectories of $x_i(t)$ and triggering instant of ${x}_i$ with $\beta = 0.2$ for the continuous-time algorithm under event-triggered communication.}
\label{fig: Trajectories of continuous-time algorithm}
\end{figure}

\begin{figure}[htbp]
\centering
\subfigure[The trajectories of $x_i(k)$.]{\includegraphics[width =1\linewidth]{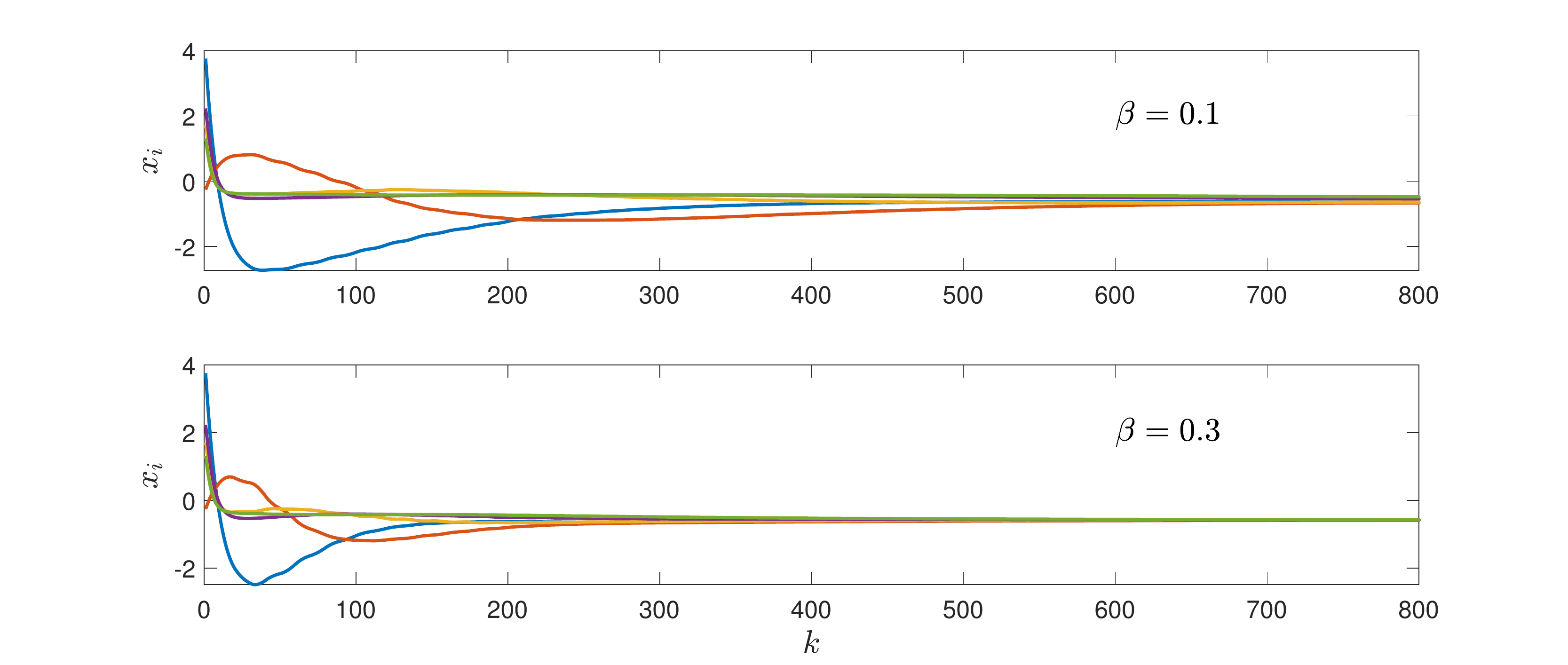}\label{fig: Trajectories of discrete-time algorithm}}
\subfigure[Triggering instant of ${x}_i$ with $\beta = 0.1$ in the upper and $\beta = 0.3$ in the lower.]{\includegraphics[width = 1 \linewidth]{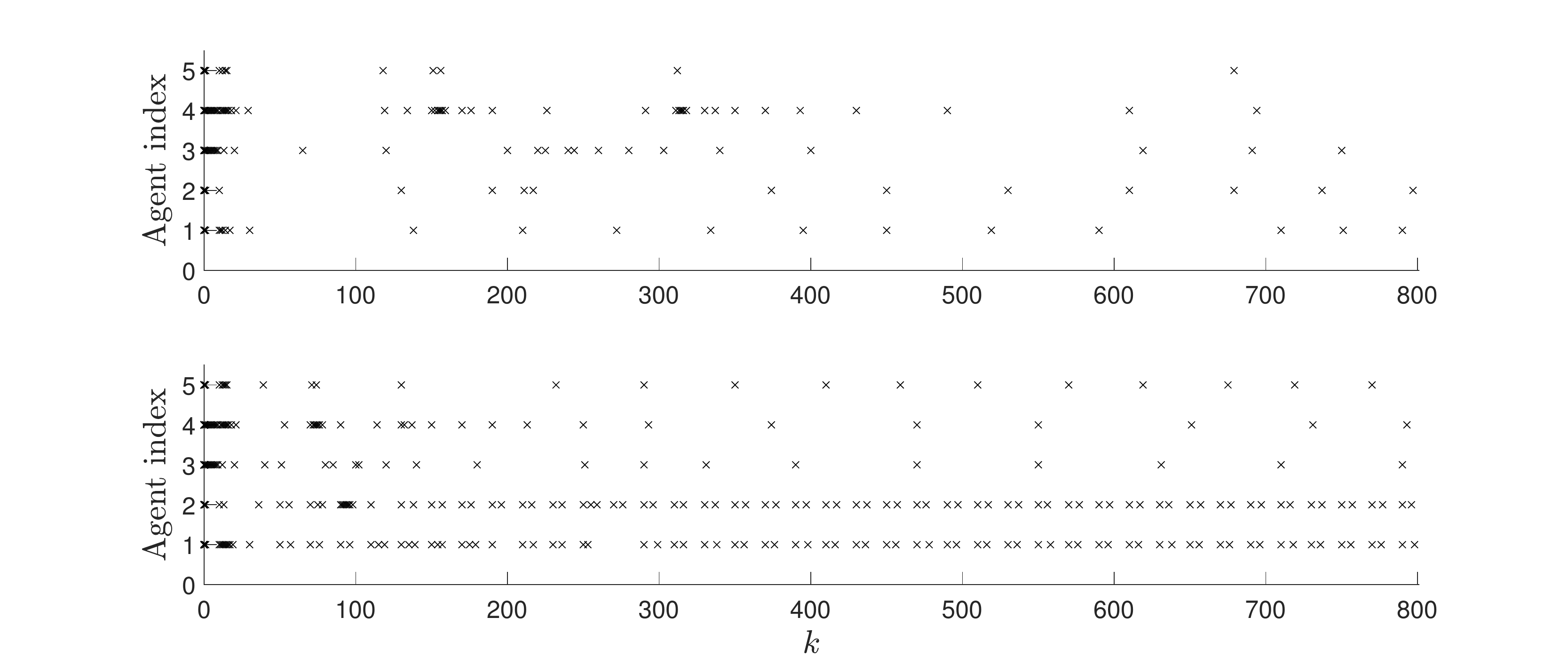}\label{fig: triggering instant discrete-time}}
\caption{The trajectories of $x_i(k)$ and triggering instant of ${x}_i$ for the discrete-time algorithm under event-triggered communication.}
\label{fig: performance of discrete-time algorithm}
\end{figure}

\section{Conclusion}\label{Conclusion}
We have proposed an event-trigger communication mechanism for the distributed continuous-time algorithm and its discrete-time counterpart over uniformly jointly strongly connected balanced graphs via the property of IFP.


\bibliographystyle{IEEEtran}
\bibliography{References}
\end{document}